\documentclass[12pt]{amsart}
\usepackage{amssymb,hyperref}
\usepackage[mathscr]{eucal}
\usepackage[utf8]{inputenc}
\usepackage[T1]{fontenc}

\oddsidemargin 0.5cm
\evensidemargin 0.5cm
\topmargin -.5cm
\textwidth 16cm
\textheight 21cm
\headheight 1.cm

\let\phi=\varphi
\newcommand{\phif}{\phi\!\cdot\!\!f}
\newcommand{\aff}{\mathop{\mbox{\rm aff}}}

\newcommand{\R}{{\mathbb R}}
\newcommand{\E}{{\mathscr E}}
\newcommand{\eq}[1]{\eqref{#1}}
\newcommand{\Eq}[2]{\begin{equation}\label{#1}#2\end{equation}}

\newtheorem{Lem}{Lemma}
\newtheorem{Thm}{Theorem}
\newtheorem*{CThm}{Ceva{}'s Theorem}
\newtheorem*{Cor}{Corollary}

\begin{document}
\date{\today}

\title[Characterization of segment and convexity preserving maps]
{Characterization of segment and convexity preserving maps}

\author[Zs. P\'ales]{Zsolt P\'ales}

\address{Institute of Mathematics, University of Debrecen, 
H-4010 Debrecen, Pf.\ 12, Hungary}

\email{pales@science.unideb.hu}

\subjclass{Primary 26A51, 26B25, 39B42}

\keywords{Segment preserving map, strict segment preserving map,
convexity preserving map, inversely convexity preserving map, 
Ceva's theorem, functional equation}

\thanks{This research has been supported by the Hungarian
Scientific Research Fund (OTKA) Grant NK81402.}


\begin{abstract}
In this note functions that transform open segments of a linear space
into open segments of another linear space are studied and characterized.
Assuming that the range is non-collinear, it is proved that
such a map can always be expressed as the ratio of two affine functions.
\end{abstract}

\maketitle

\section{Introduction}

Throughout this paper assume that $X$ and $Y$ are (real) linear spaces. 
For $a,b\in X$ (or $a,b\in Y$) the closed segment $[a,b]$ and the open 
segment $]a,b[$ connecting the points $a$ and $b$ are defined by
$$
  [a,b]:=\{ta+(1-t)b \mid 0\le t\le 1\},\qquad
  ]a,b[:=\{ta+(1-t)b \mid 0<t<1\}.
$$
(Observe that, if the endpoints $a$ and $b$ coincide then 
$[a,b]=]a,b[=\{a\}$.)

Given a convex subset $D\subset X$ and a map $f:D\to Y$, 
we can consider two {\it convexity preserving properties} 
for $f$. We say that $f$ preserves convexity if $f(K)$ is convex 
for all convex subset $K\subseteq D$. Analogously, we say that 
$f^{-1}$ preserves convexity or $f$ is inversely convexity preserving
if $f^{-1}(K)$ is convex whenever $K$ is a convex subset of $f(D)$. 
It is immediate to see that $f$ is convexity preserving if and only if
\Eq{1}{
  [f(x),f(y)]\subseteq f([x,y]) \qquad (x,y\in D).
}
On the other hand, $f$ is inversely convexity preserving if and only if
\Eq{2}{
  [f(x),f(y)]\supseteq f([x,y]) \qquad (x,y\in D).
}
Functions enjoying both of the above properties, i.e., satisfying
\Eq{0}{
  f([x,y])=[f(x),f(y)] \qquad (x,y\in D),
}
are called {\it segment preserving maps}. Therefore, $f$ is segment 
preserving if and only if it is convexity and also inversely convexity
preserving.

If, in \eq{1}, \eq{2}, and \eq{0} the closed segments are replaced by
open segments then we speak about {\it strict} convexity and segment
preserving properties for $f$. Clearly, strict convexity or segment 
preserving maps are always convexity or segment preserving (in the same 
sense), the converse, however, may not be valid (see the examples below).

The obvious candidates for (strict) convexity and segment preserving 
maps are {\it affine maps}, i.e., functions of the form $f(x)=A(x)+a$, 
where $A:X\to Y$ is linear and $a$ is a constant vector.  
It is a natural question if there exist other types of segment 
preserving functions.

One can trivially see that if $X=Y=\R$, then $f:X\to Y$ is segment
preserving if and only if it is continuous and either increasing or
decreasing; furthermore, $f$ is strictly segment preserving if and only if
it is continuous and either strictly increasing, or strictly decreasing,
or constant.  Therefore, for the first sight, the class of such maps
seems to be even more complicated in the higher-dimensional setting.
However, as we shall see, if the range of $f$ is at least two
dimensional, then the description is easier: strict segment preserving 
maps, moreover strict inversely convexity preserving maps can be 
expressed as the ratio of an $Y$-valued and a real-valued affine map.

In order to prove our results, we shall apply Ceva{}'s classical
theorem known from the elementary geometry (see \cite{Cox73}):

\begin{CThm} Let $x,y,z$ be non-collinear points of a real linear
space. Let $t_x,t_y,t_z$, $s_x,s_y,s_z$ be positive numbers and
\Eq{3a}{
  p:=\frac{t_y}{t_y+s_z}y+\frac{s_z}{t_y+s_z}z,\qquad
  q:=\frac{t_z}{t_z+s_x}z+\frac{s_x}{t_z+s_x}x,\qquad
  r:=\frac{t_x}{t_x+s_y}x+\frac{s_y}{t_x+s_y}y.
}
Then the intersection of the segments $[x,p]$, $[y,q]$, and $[z,r]$
is nonempty if and only if
\Eq{3}{
  t_x\cdot t_y\cdot t_z=s_x\cdot s_y\cdot s_z.
}
\end{CThm}

Now we briefly describe the idea how to use this theorem for segment
preserving maps.  Assume that $f:D\to Y$ is segment preserving and
$f(D)$ is at least two dimensional, i.e., it is non-collinear. Then take
$x,y,z\in D$ such that $f(x),f(y),f(z)$ form a nondegenerate triangle.
If $p\in[y,z]$, $q\in[z,x]$, and $r\in[x,y]$ are of the form \eq{3a}
such that \eq{3} holds, then (by Ceva's theorem) the intersection of the
segments $[x,p]$, $[y,q]$, and $[z,r]$ is nonempty.  Denote by $s$ its
single element. Then, the segments $[f(x),f(p)]$, $[f(y),f(q)]$, and
$[f(z),f(r)]$ have the point $f(s)$ in common. We can find positive
numbers $t'_x,t'_y,t'_z$, $s'_x,s'_y,s'_z$ such that
$$
  f(p)=\frac{t'_y}{t'_y+s'_z}f(y)+\frac{s'_z}{t'_y+s'_z}f(z),\qquad
  f(q)=\frac{t'_z}{t'_z+s'_x}f(z)+\frac{s'_x}{t'_z+s'_x}f(x),
$$
and
$$
  f(r)=\frac{t'_x}{t'_x+s'_y}f(x)+\frac{s'_y}{t'_x+s'_y}f(y).
$$
Therefore (again by Ceva's theorem), we get that
$$
  t'_x\cdot t'_y\cdot t'_z=s'_x\cdot s'_y\cdot s'_z.
$$

It will turn out that the above property yields a functional equation
for $f$ which will be solved explicitely in this paper. Thus, we will
obtain a complete characterization of strict segment preserving and
strict inversely convexity preserving functions with non-collinear range.
showing that they are rational maps, i.e., they can be expressed as
the ratio of a vector valued and a positive real valued affine map.

The use of the theory of functional equations in characterizing various
geometrical transformations has a rich literature. In the papers
\cite{Acz66a}, \cite{AczBen69}, \cite{AczMck67}, \cite{Acz71c} rational maps of the
projective and complex plane are studied by means of the theory of
functional equations.

The motivation for this research came from a recent paper \cite{Mol01b}
of Moln\'ar where the characterizations of the automorphisms of the
Hilbert space effect algebras were studied.  An effect algebra $\E(H)$
is simply the operator interval $[0,I]$ of all positive operators on a
Hilbert space $H$ bounded by the identity.  They play a fundamental role
in the so-called quantum measurement theory.  Moln\'ar has arrived at
the following problem: Is every mixture preserving bijective map
$\phi:\E(H)\to\E(H)$ a mixture-automorphism?  In other words, is it true
that any bijective map $\phi:\E(H)\to\E(H)$ with the property ``$A$ is a
convex combination of $B$ and $C$ if and only if $\phi(A)$ is also a
convex combination of $\phi(B)$ and $\phi(C)$'' is necessarily affine?
The results of this paper have significantly helped Moln\'ar to obtain
an affirmative answer to the problem described above.

The paper 

\section{Main Results}

In our first result, we describe a large class of strict segment
preserving maps.

\begin{Thm} Let $A:X\to Y$ be a linear operator, $B:X\to\R$ be a
linear function, $a\in Y$ be a vector, and $b\in\R$ be a scalar.
Let $D=\{x\mid B(x)+b>0\}$ and assume that $f:D\to Y$ is given by
\Eq{5}{
   f(x)=\frac{A(x)+a}{B(x)+b} \qquad (x\in D).
}
Then $f$ is a strict segment preserving function.
\end{Thm}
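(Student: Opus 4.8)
The plan is to show that $f$ maps each open segment $]x,y[$ in $D$ exactly onto the open segment $]f(x),f(y)[$. Fix $x,y\in D$ and a parameter $t\in{]0,1[}$, and set $z=tx+(1-t)y$. Since $B$ is linear, $B(z)+b=t(B(x)+b)+(1-t)(B(y)+b)$, which is a positive convex combination of two positive numbers and hence positive; thus $D$ is convex and $z\in D$, so $f(z)$ is well defined. The whole proof then reduces to computing $f(z)$ and recognizing it as a point of $]f(x),f(y)[$, together with a monotonicity/bijectivity check guaranteeing that every point of $]f(x),f(y)[$ is hit exactly once.

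The key computation is to evaluate
\Eq{}{
  f(z)=\frac{A(tx+(1-t)y)+a}{B(tx+(1-t)y)+b}
      =\frac{t(A(x)+a)+(1-t)(A(y)+a)}{t(B(x)+b)+(1-t)(B(y)+b)},
}
using linearity of $A$ and $B$ and the splitting $a=ta+(1-t)a$, $b=tb+(1-t)b$. Abbreviating the positive denominators $\beta_x:=B(x)+b$ and $\beta_y:=B(y)+b$, the numerator is $t\beta_x f(x)+(1-t)\beta_y f(y)$ and the denominator is $t\beta_x+(1-t)\beta_y$. Introducing the new parameter
\Eq{}{
  s:=\frac{t\beta_x}{t\beta_x+(1-t)\beta_y}\in{]0,1[},
}
one reads off $f(z)=sf(x)+(1-s)f(y)$, which exhibits $f(z)$ as a point of the open segment $]f(x),f(y)[$. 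This establishes the inclusion $f(]x,y[)\subseteq{]f(x),f(y)[}$.

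For the reverse inclusion I would observe that the map $t\mapsto s$ given above is a continuous strictly increasing bijection from $]0,1[$ onto $]0,1[$: its endpoints-limits are $s\to0$ as $t\to0$ and $s\to1$ as $t\to1$, and its monotonicity follows because $s$ is an increasing Möbius function of $t$ (the cross-ratio-type expression $s/(1-s)=(t/(1-t))(\beta_x/\beta_y)$ is strictly increasing in $t$). Consequently every $s\in{]0,1[}$, and therefore every point $sf(x)+(1-s)f(y)$ of $]f(x),f(y)[$, arises as $f(z)$ for a unique $z\in{]x,y[}$. Combining the two inclusions yields $f(]x,y[)={]f(x),f(y)[}$, which is precisely the strict segment preserving property \eq{0} with open segments.

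The computation itself is routine linear algebra; the only point requiring a little care is the reparametrization $t\mapsto s$. The mild obstacle is bookkeeping: one must verify that $s$ genuinely ranges over all of $]0,1[$ and does so bijectively, so that no point of the target open segment is missed and none is covered twice. Because both $\beta_x$ and $\beta_y$ are strictly positive on $D$, the denominator $t\beta_x+(1-t)\beta_y$ never vanishes and the Möbius reparametrization is a genuine homeomorphism of $]0,1[$, so this step goes through cleanly and the proof is complete.
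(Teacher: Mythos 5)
Your proposal is correct and follows essentially the same route as the paper's own proof: the same algebraic computation exhibiting $f(tx+(1-t)y)$ as a convex combination of $f(x)$ and $f(y)$ with coefficient $s=t\beta_x/(t\beta_x+(1-t)\beta_y)$, and the same observation that this reparametrization maps $]0,1[$ onto itself to obtain the reverse inclusion. The extra details you supply (convexity of $D$, strict monotonicity of $t\mapsto s$) are harmless refinements of what the paper leaves implicit.
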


\begin{proof} Let $x,y\in D$ be fixed and let $0<t<1$. Then
\begin{eqnarray*}
  f(tx+(1-t)y)\hspace{-3mm}
   &=& \hspace{-2mm}
       \frac{A(tx+(1-t)y)+a}{B(tx+(1-t)y)+b} \\
   &=& \hspace{-2mm}
       \frac{t(A(x)+a)+(1-t)(A(y)+a)}{t(B(x)+b)+(1-t)(B(y)+b)} \\
   &=& \hspace{-2mm}
       \frac{t(B(x)+b)}{t(B(x)+b)+(1-t)(B(y)+b)}f(x)
       +\frac{(1-t)(B(y)+b)}{t(B(x)+b)+(1-t)(B(y)+b)}f(y) \\
   &\in & \hspace{-2mm} ]f(x),f(y)[.
\end{eqnarray*}
Therefore, we have that $f(]x,y[)\subseteq ]f(x),f(y)[$ for all
$x,y\in D$.

To obtain the reversed implication, observe that the function
$$
  t\mapsto \lambda(t):=\frac{t(B(x)+b)}{t(B(x)+b)+(1-t)(B(y)+b)}
$$
maps the open unit interval $]0,1[$ onto itself, therefore, for each
$s\in]0,1[$ there exists $t\in]0,1[$ such that $\lambda(t)=s$. Then,
due to the previous relations
$$
  sf(x)+(1-s)f(y)=f(tx+(1-t)y),
$$
hence, $sf(x)+(1-s)f(y)\in f(]x,y[)$ for all $s\in]0,1[$. Thus,
we get that $]f(x),f(y)[\subseteq f(]x,y[)$ for all $x,y\in D$.
\end{proof}

The main result of this paper is contained in the next theorem.  It
shows that under the non-collinear range assumption, all strict inversely
convexity preserving functions can be expressed as the ratio of two affine
functions, and as a consequence of the previous theorem, they are also
strictly segment preserving.

\begin{Thm} Let $D\subseteq X$ be a nonempty convex subset and $f:D\to Y$
be a strict inversely convexity preserving function such that $f(D)$ is
non-collinear, i.e., it contains a nondegenerate triangle. Then there exist
a linear operator $A:X\to Y$, a linear function $B:X\to\R$, a vector
$a\in Y$, and a scalar $b\in\R$ such that
\Eq{6}{
   B(x)+b>0 \qquad\mbox{for}\quad x\in D
}
and $f$ admits the representation \eq{5}.
\end{Thm}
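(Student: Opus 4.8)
The plan is to turn the geometric hypothesis into a functional equation via Ceva's theorem, to solve that equation, and then to read off the affine data $A,B,a,b$. Since $f(D)$ is non-collinear, fix $x,y,z\in D$ for which $f(x),f(y),f(z)$ span a nondegenerate triangle. For $u,v,w\in\,]0,1[$ set $p=(1-u)y+uz$, $q=(1-v)z+vx$, $r=(1-w)x+wy$; comparing with \eq{3a} one sees that these are the feet of the cevians occurring there and that Ceva's condition \eq{3} becomes $\frac{u}{1-u}\cdot\frac{v}{1-v}\cdot\frac{w}{1-w}=1$. As $f$ is strict inversely convexity preserving, $f(p)\in\,]f(y),f(z)[\,$, so there is a unique $U=\Phi_{yz}(u)\in\,]0,1[$ with $f(p)=(1-U)f(y)+Uf(z)$; define $V=\Phi_{zx}(v)$ and $W=\Phi_{xy}(w)$ in the same way. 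When \eq{3} holds the three cevians meet at an interior point $s$, and since $s$ lies strictly between $x$ and $p$ (and likewise on the other two cevians), strict inverse convexity preservation gives $f(s)\in\,]f(x),f(p)[\,\cap\,]f(y),f(q)[\,\cap\,]f(z),f(r)[$. The image triangle being nondegenerate, the converse direction of Ceva's theorem then yields $\frac{U}{1-U}\cdot\frac{V}{1-V}\cdot\frac{W}{1-W}=1$.

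\emph{A Pexider equation.} Pass to log-odds coordinates $\xi=\log\frac{u}{1-u}$, $\eta=\log\frac{v}{1-v}$, $\zeta=\log\frac{w}{1-w}$ and put $\phi(\xi)=\log\frac{\Phi_{yz}(u)}{1-\Phi_{yz}(u)}$, and similarly $\psi,\rho$ for $\Phi_{zx},\Phi_{xy}$. The two Ceva identities say exactly that $\xi+\eta+\zeta=0$ forces $\phi(\xi)+\psi(\eta)+\rho(\zeta)=0$, i.e. $\phi(\xi)+\psi(\eta)=-\rho(-\xi-\eta)$ for all $\xi,\eta\in\R$. This is Pexider's equation, whose general solution is $\phi(\xi)=\kappa(\xi)+c_1$, $\psi=\kappa+c_2$, $\rho=\kappa+c_3$ with $\kappa\colon\R\to\R$ additive and $c_1+c_2+c_3=0$. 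Thus in log-odds coordinates every side-map of every nondegenerate triangle is the same additive function up to an additive constant; since each segment of $D$ is a side of some such triangle and overlapping triangles share the additive part, a single additive $\kappa$ governs $f$ on all segments.

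\emph{Forcing $\kappa=\mathrm{id}$ (the crux).} Writing $M(t)=\exp\kappa(\log t)$, a multiplicative function on $]0,\infty[$, each side-map takes the form $\frac{U}{1-U}=k\,M\!\big(\frac{u}{1-u}\big)$ with a positive constant $k$ attached to the ordered segment. In barycentric coordinates with respect to $x,y,z$ this means the self-map of the open triangle induced by $f$ is $(\lambda_x,\lambda_y,\lambda_z)\mapsto(c_xM(\lambda_x):c_yM(\lambda_y):c_zM(\lambda_z))$ for suitable positive constants $c_x,c_y,c_z$. Now apply strict inverse convexity preservation to segments of the triangle that are \emph{not} cevians: the map must send each such segment into a segment, and writing out the collinearity of the images of a segment parallel to the side $[y,z]$ produces, for constants $\alpha,\beta$, a relation $M(\lambda)+\alpha M(\rho-\lambda)=\beta$ valid for all admissible $\lambda$. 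Together with the multiplicativity of $M$ this excludes the pathological additive solutions and forces $M(t)=t$, i.e. $\kappa=\mathrm{id}$. I expect this to be the main obstacle: because no regularity is assumed on $f$, discarding the non-identity additive solutions of the Pexider equation cannot be done from the one-dimensional cevian data alone and genuinely requires the two-dimensional segment-preservation.

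\emph{Reconstruction.} With $\kappa=\mathrm{id}$ the induced barycentric map is the restriction of the diagonal projective transformation $(\lambda_j)\mapsto(c_j\lambda_j)$, so on each triangle $f$ coincides with a ratio of an affine $Y$-valued map and a positive affine scalar map. Concretely, the side-maps read $\frac{U}{1-U}=\frac{g(z)}{g(y)}\cdot\frac{u}{1-u}$, where the positive constants $k_{yz}=g(z)/g(y)$ form a coboundary (because $k_{yz}k_{zx}k_{xy}=1$), defining $g\colon D\to\,]0,\infty[$ up to a positive factor. Solving for $f(p)$ gives $g(p)f(p)=(1-u)g(y)f(y)+u\,g(z)f(z)$ together with $g(p)=(1-u)g(y)+u\,g(z)$ on every segment, so both $g$ and $h:=g\cdot f$ are affine on the convex set $D$. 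Writing $g(x)=B(x)+b$ with $B$ linear and $h(x)=A(x)+a$ with $A$ linear, we obtain $f=h/g$, which is precisely \eq{5}, while $g>0$ is \eq{6}. By the previous theorem $f$ is then also strictly segment preserving, completing the characterization.
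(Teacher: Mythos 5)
Your overall architecture (Ceva implies a multiplicative Pexider equation for the side maps, whose solution exhibits $f$ as a ratio of affine functions) is the paper's, but the proposal has two genuine gaps, one of them at the step you yourself flag as the crux. First, your diagnosis there is wrong: you claim that ``discarding the non-identity additive solutions of the Pexider equation cannot be done from the one-dimensional cevian data alone.'' In fact it can, and this is the paper's Lemma~1: for $0<s<t$ the point with parameter $t$ on $]x,y[$ lies in the open segment joining the point with parameter $s$ to $y$, so \eq{7} applied to this subsegment forces each side map $\alpha_{x,y}$ (your $\Phi$, in odds coordinates) to be \emph{strictly increasing}. A monotone solution of the Pexider system is automatically of power form, so $M(t)=t^c$ with $c>0$ comes for free from one-dimensional data; the only thing genuinely requiring the two-dimensional hypothesis is $c=1$. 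Your substitute argument for this step is both unproven and circular as written: the barycentric formula for the induced self-map of the \emph{open} triangle, $(\lambda_x:\lambda_y:\lambda_z)\mapsto(c_xM(\lambda_x):c_yM(\lambda_y):c_zM(\lambda_z))$, does not follow from the side maps --- it is a statement about $f$ at \emph{interior} points of the triangle, and it is exactly the paper's Lemma~5, proved by a second, different concurrency argument (the three cevians through an interior point have images passing through the common image point, and non-collinearity makes the intersection of the three image segments a singleton). Your segment-parallel-to-$[y,z]$ relation $M(\lambda)+\alpha M(\rho-\lambda)=\beta$ presupposes this interior formula; and even granting it, you assert rather than prove that it forces $M(t)=t$ (a repair exists: $M$ is then bounded above on an interval, an additive function bounded on an interval is linear, so $M(t)=t^c$; letting $\lambda\to 0^+$ and $\lambda\to\rho^-$ gives $\alpha=1$, $\beta=\rho^c$, and then $\mu^c+(1-\mu)^c=1$ forces $c=1$, which is precisely the paper's Lemma~6 endgame --- but none of this is in your text).

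The reconstruction paragraph has a second gap of the same kind. From the side-map formula one can only conclude that $f(p)$ equals the ratio $\bigl((1-u)g(y)f(y)+u\,g(z)f(z)\bigr)/\bigl((1-u)g(y)+u\,g(z)\bigr)$; your parallel claim that ``$g(p)=(1-u)g(y)+u\,g(z)$ on every segment'' is not obtained by ``solving for $f(p)$'' --- it is the assertion that the normalizing function $g$ (defined only through midpoint ratios of pairs with distinct images) is affine, i.e., that its values at interior points of one segment are consistent with its values read off along transversal segments. This cross-direction consistency is the content of the paper's Lemma~6, which needs the interior (three-variable) formula, an elimination of the auxiliary point $z$, and a separate midpoint trick for the degenerate segments with $f(x)=f(y)$, on which your $\Phi_{xy}$ and $k_{xy}$ are simply undefined (such collapsing does occur, e.g.\ for projections, so this case cannot be waved away; likewise your cocycle identity $k_{yz}k_{zx}k_{xy}=1$ is only established for triples with non-collinear images and must be patched to collinear triples via a fourth point, as in the paper's Lemmas~2 and~3). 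Finally, passing from ``$g$ and $g\cdot f$ affine on the convex set $D$'' to globally defined \emph{linear} $A,B$ on $X$ is not a matter of notation: $D$ may be a proper subset, and the paper's Lemma~7 extends to the affine hull and then to $X$ by a transfinite argument. These last points are citable, but the interior formula and the affinity of $g$ are essential steps your proposal asserts without proof.
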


\begin{proof} By the assumption of the theorem, $f:D\to Y$ satisfies 
\Eq{7}{
  f(]x,y[) \subseteq ]f(x),f(y)[ \qquad (x,y\in D).
}
In other words, to each point of the segment $]x,y[$, there
corresponds a unique element of $]f(x),f(y)[$.
Thus, for $x,y\in D$ with $f(x)\neq f(y)$, we can uniquely
define a function $\alpha_{x,y}:\R_+\to\R_+$ by the formula
$$
  f\Bigl(\frac{1}{1+t}x+\frac{t}{1+t}y\Bigr)
    =\frac{1}{1+\alpha_{x,y}(t)}f(x)
     +\frac{\alpha_{x,y}(t)}{1+\alpha_{x,y}(t)}f(y) \qquad (t>0).
$$

We split the rest of the proof of Theorem 2 into a sequence of lemmas.
First we derive elementary properties of the function $\alpha_{x,y}$.

\begin{Lem} For all $x,y\in D$ such that $f(x)\neq f(y)$, the function
$\alpha_{x,y}$ is strictly increasing and the following identity holds
\Eq{9}{
   \alpha_{x,y}(t)=\frac{1}{\alpha_{y,x}(1/t)} \qquad (t>0).
}
\end{Lem}

\begin{proof}[Proof of Lemma 1] Let $0<s<t$ be fixed. Then
$$
  \frac{1}{1+t}x+\frac{t}{1+t}y
  =\frac{1+s}{1+t}\Bigl(\frac{1}{1+s}x+\frac{s}{1+s}y\Bigr)
     +\frac{t-s}{1+t}y
  \in\Bigl]\frac{1}{1+s}x+\frac{s}{1+s},y\Bigr[.
$$
Therefore, by the property \eq{7} and the definition of $\alpha_{x,y}$,
\begin{eqnarray*}
  \frac{1}{1+\alpha_{x,y}(t)}f(x)
         +\frac{\alpha_{x,y}(t)}{1+\alpha_{x,y}(t)}f(y)
  & = & f\Bigl(\frac{1}{1+t}x+\frac{t}{1+t}y\Bigr) \\
  &\in& f\Bigl(\Bigl]\frac{1}{1+s}x+\frac{s}{1+s},y\Bigr[\Bigr) \\
  &\subseteq& \Bigl]\frac{1}{1+\alpha_{x,y}(s)}f(x)
            +\frac{\alpha_{x,y}(s)}{1+\alpha_{x,y}(s)}f(y),f(y)\Bigr[.
\end{eqnarray*}
Thus, there exist $0<r<1$ such that
$$
  \frac{1}{1+\alpha_{x,y}(t)}f(x)
    +\frac{\alpha_{x,y}(t)}{1+\alpha_{x,y}(t)}f(y)
  = r\Bigl(\frac{1}{1+\alpha_{x,y}(s)}f(x)
        +\frac{\alpha_{x,y}(s)}{1+\alpha_{x,y}(s)}f(y)\Bigr)+(1-r)f(y).
$$
The points $f(x)$ and $f(y)$ being different, the coefficient of $f(x)$
on both sides has to be the same. Hence
$$
  \frac{1+\alpha_{x,y}(s)}{1+\alpha_{x,y}(t)} = r<1,
$$
which yields $\alpha_{x,y}(s)<\alpha_{x,y}(t)$ proving that
$\alpha_{x,y}$ is strictly increasing.

To prove \eq{9}, let $t>0$. Then we have
\begin{eqnarray*}
 \frac{1}{1+\alpha_{x,y}(t)}f(x)
  +\frac{\alpha_{x,y}(t)}{1+\alpha_{x,y}(t)}f(y)
  &=& f\Bigl(\frac{1}{1+t}x+\frac{t}{1+t}y\Bigr) \\
  &=& f\Bigl(\frac{1}{1+1/t}y+\frac{1/t}{1+1/t}x\Bigr) \\
  &=&  \frac{1}{1+\alpha_{y,x}(1/t)}f(y)
      +\frac{\alpha_{y,x}(1/t)}{1+\alpha_{y,x}(1/t)}f(x).
\end{eqnarray*}
The coefficients of $f(x)$ are the same on the left and
right hand sides, therefore,
$$
 \frac{1}{1+\alpha_{x,y}(t)}
   =\frac{\alpha_{y,x}(1/t)}{1+\alpha_{y,x}(1/t)},
$$
which results \eq{9}.
\end{proof}

\begin{Lem} For all $x,y,z\in D$ with $f(x)\neq f(y)\neq
f(z)\neq f(x)$, the following functional equation holds
\Eq{10}{
   \alpha_{x,y}(st)=\alpha_{x,z}(s)\cdot\alpha_{z,y}(t)
   \qquad (s,t>0).
}
\end{Lem}

\begin{proof}[Proof of Lemma 2]
First assume that $x,y,z$ are such that $f(x),f(y),f(z)$ form a
nondegenerate triangle. Then $x,y,z$ also form a nondegenerate triangle.
(Indeed, if $x,y$, and $z$ were collinear, then one of them would be 
between the two others, say $z\in]x,y[$. Then, by \eq{7}, 
$f(z)\in]f(x),f(y)[$ which is an obvious contradiction.) 
Let $s,t>0$ be fixed and consider the points
$$
  p(t):=\frac{t}{1+t}y+\frac{1}{1+t}z,\qquad
  q(s):=\frac{s}{1+s}z+\frac{1}{1+s}x,\qquad
  r(st):=\frac{1}{1+st}x+\frac{st}{1+st}y.
$$
Then, by Ceva{}'s Theorem, the segments
$$
  ]x,p(t)[,\qquad
  ]y,q(s)[,\qquad\mbox{and}\qquad
  ]z,r(st)[
$$
have a nonempty intersection. Therefore, by the inverse convexity 
preserving property \eq{7} of $f$, the intersection of their images
\Eq{10A}{
  ]f(x),f(p(t))[,\qquad
  ]f(y),f(q(s))[,\qquad\mbox{and}\qquad
  ]f(z),f(r(st))[
}
is also nonempty. On the other hand, due to the definition of the
function $\alpha$, we get
$$
  f(p(t))=\frac{\alpha_{z,y}(t)}{1+\alpha_{z,y}(t)}f(y)
            +\frac{1}{1+\alpha_{z,y}(t)}f(z), \qquad
  f(q(s))=\frac{\alpha_{x,z}(s)}{1+\alpha_{x,z}(s)}f(z)
            +\frac{1}{1+\alpha_{x,z}(s)}f(x),
$$
and
$$
  f(r(st))=\frac{1}{1+\alpha_{x,y}(st)}f(x)
      +\frac{\alpha_{x,y}(st)}{1+\alpha_{x,y}(st)}f(y).
$$
Thus, again by Ceva{}'s Theorem, the non-emptiness of the intersection
of the segments \eq{10A} yields that \eq{10} is satisfied.

Finally we discuss the case when $f(x)$, $f(y)$, and $f(z)$ are pairwise
distinct collinear points. Then there exists a point $u\in D$ such that
$f(x)$, $f(y)$, $f(z)$, and $f(u)$ are non-collinear (otherwise
the range of $f$ were covered by a line). Then, using repeatedly what 
we have proved for the non-collinear case,
$$
  \alpha_{x,y}(st)
   = \alpha_{x,u}(s)\cdot \alpha_{u,y}(t)
   = \alpha_{x,z}(s)\cdot \alpha_{z,u}(1)\cdot \alpha_{u,y}(t)
   = \alpha_{x,z}(s)\cdot \alpha_{z,y}(t).
$$
Thus the proof is also complete for this case.
\end{proof}

In the next lemma we solve the functional equation \eq{10}.

\begin{Lem} There exists a positive constant $c>0$ and a
positive-valued function $\phi:D\to\R_+$ such that
\Eq{11}{
   \alpha_{x,y}(t)=\frac{\phi(y)}{\phi(x)}t^c
   \qquad\mbox{for}\quad t>0,\, x,y\in D \mbox{ with } f(x)\neq f(y).
}
\end{Lem}

\begin{proof}[Proof of Lemma 3]
Let $x_0,y_0,z_0\in D$ be fixed points such that $f(x_0),f(y_0),f(z_0)$
is non-collinear. Then, by Lemma 3, we have that the functions strictly
increasing functions $\alpha_{x_0,y_0}$, $\alpha_{x_0,z_0}$, and
$\alpha_{z_0,y_0}$ satisfy a Pexider-type functional equation. Thus,
by the theory of functional equations (see e.g.\ \cite{Acz66},
\cite{Kuc85}), there exists a constant $c\in\R$ such that
\Eq{12}{
  \alpha_{x_0,y_0}(t)=\alpha_{x_0,y_0}(1)t^c,\quad
  \alpha_{x_0,z_0}(t)=\alpha_{x_0,z_0}(1)t^c,\quad
  \alpha_{z_0,y_0}(t)=\alpha_{z_0,y_0}(1)t^c \qquad\mbox{for}\quad t>0.
}
Due to the monotonicity properties of these functions, we have that
$c>0$.

We are going to show that
\Eq{13}{
  \alpha_{x,y}(t)=\alpha_{x,y}(1)t^c
     \qquad\mbox{for}\quad t>0,\,x,y\in D\mbox{ with } f(x)\neq f(y).
}
By \eq{12}, and by Lemma 1, we have that \eq{13} holds if $x,y\in
D_0:=\{x_0,y_0,z_0\}$.

Now we verify \eq{13} if $x\not\in D_0$, $y\in D_0$ and 
$f(x)\neq f(y)$. Assume that $y=x_0$. Then one of the triples
$f(x),f(x_0),f(y_0)$ and $f(x),f(x_0),f(z_0)$ is non-collinear, 
say the first. Then, using \eq{10} and \eq{12},
$$
  \alpha_{x,x_0}(t)
     =\alpha_{x,y_0}(1)\cdot\alpha_{y_0,x_0}(t)
     =\alpha_{x,y_0}(1)\cdot\alpha_{y_0,x_0}(1)t^c
     =\alpha_{x,x_0}(1)t^c
  \qquad\mbox{for}\quad t>0.
$$
We can also get \eq{13} in the cases $y=y_0$, $y=z_0$ similarly.

The proof of \eq{13} for the case when $x\in D_0$, $y\not\in D_0$
follows from the previous case and from the identity \eq{9}.

Now we consider the case when $x,y\not\in D_0$. Then for one of the
points of $D_0$, say for $x_0$, the point $f(x_0)$ is non-collinear
together with $f(x)$ and $f(y)$. Then, using Lemma 2 again and what 
we have already shown,
$$
  \alpha_{x,y}(t)
     =\alpha_{x,x_0}(1)\cdot\alpha_{x_0,y}(t)
     =\alpha_{x,x_0}(1)\cdot\alpha_{x_0,y}(1)t^c
     =\alpha_{x,y}(1)t^c
  \qquad\mbox{for}\quad t>0.
$$
Thus \eq{13} is proved.

To complete the proof of the lemma, define $\phi:D\to\R_+$ by
$$
  \phi(x):=\left\{\begin{array}{ll}
             \alpha_{x_0,x}(1)
                     \qquad & \mbox{if } f(x)\neq f(x_0). \\[2mm]
             \alpha_{x_0,y_0}(1) \cdot \alpha_{y_0,x}(1)
                     \qquad & \mbox{if } f(x)=f(x_0).
           \end{array}\right.
$$
In view of \eq{13}, it is sufficient to prove that
\Eq{13a}{
   \alpha_{x,y}(1)=\frac{\phi(y)}{\phi(x)} \qquad\mbox{for}\quad x,y\in D
   \mbox{ with } f(x)\neq f(y).
}

We distinguish three different cases.

The first case is when $\{f(x),f(y)\}$ does not contain $f(x_0)$,
that is, when $f(x)\neq f(x_0)\neq f(y)$. Then
$$
  \alpha_{x,y}(1)
   =\alpha_{x,x_0}(1)\cdot\alpha_{x_0,y}(1)
   =\frac{\alpha_{x_0,y}(1)}{\alpha_{x_0,x}(1)}
   =\frac{\phi(y)}{\phi(x)}.
$$

The second case is when $\{f(x),f(y)\}$ contains $f(x_0)$, but it
 does not contain $f(y_0)$. Then there are two possibilities.
If $f(x)=f(x_0)$ and $f(y)\neq f(y_0)$, then
$$
  \alpha_{x,y}(1)
   =\alpha_{x,y_0}(1)\cdot\alpha_{y_0,y}(1)
   =\alpha_{x,y_0}(1)\cdot\alpha_{y_0,x_0}(1)\cdot\alpha_{x_0,y}(1)
   =\frac{\alpha_{x_0,y}(1)}{\alpha_{x_0,y_0}(1)\cdot\alpha_{y_0,x}(1)}
   =\frac{\phi(y)}{\phi(x)}.
$$
If $f(y)=f(x_0)$ and $f(x)\neq f(y_0)$, then the proof is similar
to the previous argument.

The third case is when $\{f(x),f(y)\}$ contains $f(x_0)$ and also
$f(y_0)$. Then again, there are two possibilities.
If $f(x)=f(x_0)$ and $f(y)=f(y_0)$, then
\begin{eqnarray*}
  \alpha_{x,y}(1)
   &=&\alpha_{x,z_0}(1)\cdot\alpha_{z_0,y}(1)
   =\alpha_{x,y_0}(1)\cdot\alpha_{y_0,z_0}(1)
      \cdot\alpha_{z_0,x_0}(1)\cdot\alpha_{x_0,y}(1)\\
   &=&\alpha_{x,y_0}(1)\cdot\alpha_{y_0,x_0}(1)\cdot\alpha_{x_0,y}(1)
   =\frac{\alpha_{x_0,y}(1)}{\alpha_{x_0,y_0}(1)\cdot\alpha_{y_0,x}(1)}
   =\frac{\phi(y)}{\phi(x)}.
\end{eqnarray*}
Finally, if $f(y)=f(x_0)$ and $f(x)=f(y_0)$, then we can argue in a
similar way.

Thus, in all cases, the proof of \eq{13a} is complete.
\end{proof}

\begin{Lem} The functions $f$ and $\phi$ satisfy the functional
equation
\Eq{14}{
   f\Bigl(\frac{tx+sy}{t+s}\Bigr)
   =\frac{t^c\phif(x)+s^c\phif(y)}{t^c\phi(x)+s^c\phi(y)}
   \qquad (x,y\in D,\ t,s>0).
}
\end{Lem}

Here and in the subsequent lemmas, we denote by $\phif$ the function 
defined by $\phif(x)=\phi(x)f(x)$  ($x\in D$).

\begin{proof}[Proof of Lemma 4]
If $f(x)=f(y)$, then $\dfrac{tx+sy}{t+s}\in]x,y[$ and \eq{7} yield that
$$
  f\Bigl(\frac{tx+sy}{t+s}\Bigr) \in ]f(x),f(y)[ = \{f(x)\},
$$
that is, the left hand side of \eq{14} equals $f(x)$. However, the right
hand side is also identical to $f(x)$, hence \eq{14} holds trivially
in this case.

Therefore, we can assume that $f(x)\neq f(y)$. Then, using the
definition of $\alpha_{x,y}$ and the previous lemma, we get
\begin{eqnarray*}
   f\Bigl(\frac{tx+sy}{t+s}\Bigr)
    &=&f\Bigl(\frac{1}{1+s/t}x+\frac{s/t}{1+s/t}y\Bigr)\\
    &=&\frac{1}{1+\alpha_{x,y}(s/t)}f(x)
           +\frac{\alpha_{x,y}(s/t)}{1+\alpha_{x,y}(s/t)}f(y) \\
    &=&\frac{1}{1+{(\phi(y)s^c)}/{(\phi(x)t^c})}f(x)
           +\frac{{(\phi(y)s^c)}/{(\phi(x)t^c})}
                  {1+{(\phi(y)s^c)}/{(\phi(x)t^c)}}f(y) \\
    &=&\frac{t^c\phif(x)+s^c\phif(y)}
           {t^c\phi(x)+s^c\phi(y)}.
\end{eqnarray*}
Thus \eq{14} is valid in this case as well.
\end{proof}

In the next lemma, we extend the functional equation \eq{14} to three
variables. 

\begin{Lem} For $x,y,z\in D$ such that $f(x)$, $f(y)$, and $f(z)$ are
non-collinear, the functions $f$ and $\phi$ satisfy the functional equation
\Eq{15}{
   f\Bigl(\frac{tx+sy+rz}{t+s+r}\Bigr)
   =\frac{t^c\phif(x)+s^c\phif(y)+r^c\phif(z)}
       {t^c\phi(x)+s^c\phi(y)+r^c\phi(z)}
   \qquad (t,s,r>0).
}
\end{Lem}

\begin{proof}[Proof of Lemma 5]
For $t,s,r>0$, consider the point
$$
   p(t,s,r):=\frac{tx+sy+rz}{t+s+r}.
$$
Then
$$
  p(t,s,r)
    =\frac{t}{t+s+r}x+\frac{s+r}{t+s+r}\cdot\frac{sy+rz}{s+r}
    \in \Bigl]x,\frac{sy+rz}{s+r}\Bigr[.
$$
Therefore, by the inverse convexity preserving property \eq{7} 
and by Lemma 4, we get
$$
 f(p(t,s,r))
    \in f\Bigl(\Bigl]x,\frac{sy+rz}{s+r}\Bigr[\Bigr)
    \subseteq \Bigl]f(x),f\Bigl(\frac{sy+rz}{s+r}\Bigr)\Bigr[
    = \Bigl]f(x),\frac{s^c\phif(y)+r^c\phif(z)}
                         {s^c\phi(y)+r^c\phi(z)}\Bigr[.
$$
On the other hand, we also have that
$$
   \frac{t^c\phif(x)+s^c\phif(y)+r^c\phif(z)}
       {t^c\phi(x)+s^c\phi(y)+r^c\phi(z)}
    \in \Bigl]f(x),\frac{s^c\phif(y)+r^c\phif(z)}
                         {s^c\phi(y)+r^c\phi(z)}\Bigr[.
$$
For similar reasons,
$$
  f(p(t,s,r)) \qquad\mbox{and}\qquad
  \frac{t^c\phif(x)+s^c\phif(y)+r^c\phif(z)}
       {t^c\phi(x)+s^c\phi(y)+r^c\phi(z)}
$$
also belong to the two segments
$$
    \Bigl]f(y),\frac{t^c\phif(x)+r^c\phif(z)}
                    {t^c\phi(x)+r^c\phi(z)}\Bigr[
    \qquad\mbox{and}\qquad
    \Bigl]f(z),\frac{t^c\phif(x)+s^c\phif(y)}
                    {t^c\phi(x)+s^c\phi(y)}\Bigr[.
$$
Due to the non-collinearity of $f(x)$, $f(y)$, and $f(z)$, the
intersection of the above three segments is a singleton, whence
we get that
$$
  f(p(t,s,r)) = \frac{t^c\phif(x)+s^c\phif(y)+r^c\phif(z)}
                     {t^c\phi(x)+s^c\phi(y)+r^c\phi(z)},
$$
which is exactly the desired equality \eq{15}.
\end{proof}

Based on the previous two lemmas, we derive functional equations for $\phi$
and also for $\phif$ showing that these functions are affine.

\begin{Lem} The functions $\phif$ and $\phi$ satisfy the
functional equations
\Eq{16}{
   \left\{\quad \begin{array}{rcl}
      \phif(tx+(1-t)y)&=&t\phif(x)+(1-t)\phif(y) \\
       \phi(tx+(1-t)y)&=&t\phi(x)+(1-t)\phi(y)
   \end{array}\right.
   \qquad\mbox{for}\quad x,y\in D,\,t\in[0,1]
}
and $c=1$.
\end{Lem}

\begin{proof}[Proof of Lemma 6]
First assume that $x,y\in D$ such that $f(x)\neq f(y)$. Take $z\in D$
arbitrarily such that $f(x)$, $f(y)$, and $f(z)$ are non-collinear.
Then, by Lemma 5, we have that
$$
   f\Bigl(\frac{tx+(1-t)y+rz}{1+r}\Bigr)
   =\frac{t^c\phif(x)+(1-t)^c\phif(y)+r^c\phif(z)}
       {t^c\phi(x)+(1-t)^c\phi(y)+r^c\phi(z)}
   \qquad\mbox{for}\quad t\in]0,1[,\,r>0.
$$
On the other hand, by Lemma 4, we obtain that
$$
   f\Bigl(\frac{tx+(1-t)y+rz}{1+r}\Bigr)
   =f\Bigl(\frac{1\cdot(tx+(1-t)y)+rz}{1+r}\Bigr)
   =\frac{\phif(tx+(1-t)y)+r^c\phif(z)}{\phi(tx+(1-t)y)+r^c\phi(z)}.
$$
Thus, for $t\in]0,1[$, $r>0$,
$$
  \frac{t^c\phif(x)+(1-t)^c\phif(y)+r^c\phif(z)}
       {t^c\phi(x)+(1-t)^c\phi(y)+r^c\phi(z)}
   =\frac{\phif(tx+(1-t)y)+r^c\phif(z)}{\phi(tx+(1-t)y)+r^c\phi(z)}.
$$
Rearranging this equation and using the identity
$$
  [t^c\phif(x)+(1-t)^c\phif(y)]\phi(tx+(1-t)y)
  =[t^c\phi(x)+(1-t)^c\phi(y)]\phif(tx+(1-t)y)
$$
(which is a consequence of Lemma 4), then simplifying by
$r^c\phi(z)>0$, we get that
\begin{eqnarray*}
  && t^c\phif(x)+(1-t)^c\phif(y)
             +f(z)\phi(tx+(1-t)y) \\
  && \qquad\qquad=[t^c\phi(x)+(1-t)^c\phi(y)]f(z)
             +\phif(tx+(1-t)y) \qquad (t\in[0,1]).
\end{eqnarray*}
The point $z$ being arbitrary (such that the non-collinearity
condition holds), this equality results
\Eq{17}{
   \left\{\quad \begin{array}{rcl}
      \phif(tx+(1-t)y)&=&t^c\phif(x)+(1-t)^c\phif(y) \\
       \phi(tx+(1-t)y)&=&t^c\phi(x)+(1-t)^c\phi(y)
   \end{array}\right.
   \qquad (t\in[0,1]).
}

Now we consider the case $f(x)=f(y)$. We are going to show that
\eq{17} is satisfied in this case, too.

Choose $z\in D$ such that $f(z)\neq f(x)$. Then, by \eq{17}, we
have that
$$
  \phi\Bigl(\frac{x+z}{2}\Bigr)=\frac{\phi(x)+\phi(z)}{2^c},
$$
whence
\Eq{18}{
  \phi(x)=2^c\phi\Bigl(\frac{x+z}{2}\Bigr)-\phi(z).
}
Observe that
$$
  f\Bigl(\frac{x+z}{2}\Bigr)\neq f(y)
  \qquad\mbox{and}\qquad
  f(z)\neq f(tx+(1-t)y)=f(x).
$$
Thus, using \eq{18} and applying \eq{17} twice (in the cases when it
has already been proved), we get
\begin{eqnarray*}
  t^c\phi(x)+(1-t)^c\phi(y)
    &=& (2t)^c\phi\Bigl(\frac{x+z}{2}\Bigr)+(1-t)^c\phi(y)-t^c\phi(z) \\
    &=& (1+t)^c\Bigl[\Bigl(\frac{2t}{1+t}\Bigr)^c
             \phi\Bigl(\frac{x+y}{2}\Bigr)
             +\Bigl(\frac{1-t}{1+t}\Bigr)^c\phi(y)\Bigr]
             -t^c\phi(z) \\
    &=& (1+t)^c \phi\Bigl(\frac{2t}{1+t}\cdot\frac{x+z}{2}
             +\frac{1-t}{1+t}\cdot y\Bigr)-t^c\phi(z) \\
    &=& (1+t)^c \phi\Bigl(\frac{t}{1+t}\cdot z
             +\frac{1}{1+t}\cdot (tx+(1-t)y)\Bigr)-t^c\phi(z) \\
    &=& (1+t)^c\Bigl[\Bigl(\frac{t}{1+t}\Bigr)^c \phi(z)
             +\Bigl(\frac{1}{1+t}\Bigr)^c\phi(tx+(1-t)y)\Bigr]
             -t^c\phi(z) \\
    &=& \phi(tx+(1-t)y).
\end{eqnarray*}
Repeating the same argument with the function $\phif$, we get that
the first equation of \eq{17} is also valid for all $x,y\in D$.

Putting $x=y$ into the second equation of \eq{17}, we get that
$t^c+(1-t)^c=1$ for all $t\in[0,1]$. Hence, $c=1$, and then \eq{17}
reduces to \eq{16} which was to be proved.
\end{proof}

\begin{Lem} Then there exist a linear operator $A:X\to Y$, a linear
function $B:X\to\R$, a vector $a\in Y$, and a scalar $b\in\R$ such
that \eq{6} is valid and
\Eq{19}{
   \left\{\quad \begin{array}{rcl}
      \phif(x)&=&A(x)+a \\
       \phi(x)&=&B(x)+b
   \end{array}\right.
   \qquad (x\in D).
}
\end{Lem}

\begin{proof}[Proof of Lemma 7]
We prove the statement for the function $\phi$. The proof for $\phif$
is identical. We know from Lemma 6, that $\phi$ satisfies the second
equation of \eq{16}. Let $0<t<1$ be fixed. Then the meaning of \eq{16}
is that $\phi$ is $t$-affine on $D$. By the extension theorem of
\cite[Theorem 5]{Pal02b}, there exists a uniquely determined extension
$\phi_t:\aff(D)\to\R$ of $\phi$ which satisfies the functional equation
$$
  \phi_t(tx+(1-t)y)= t\phi_t(x)+(1-t)\phi_t(y) 
    \qquad (x,y\in\aff(D)),
$$
where
$$
  \aff(D):=\{sx+(1-s)y\mid x,y\in D,\,s\in\R\},
$$
which is called the {\it affine hull of $D$}. One can check that,
if $D$ is convex, then $\aff(D)$ is the smallest affine subspace
containing $D$. On the other hand, by the results of \cite{DarPal87},
a $t$-affine function is always $(1/2)$-affine, or in other terms,
Jensen-affine. Therefore, $\phi_t$ also satisfies
$$
  \phi_t\Bigl(\frac{x+y}{2}\Bigr)= \frac{\phi_t(x)+\phi_t(y)}{2}
  \qquad (x,y\in\aff(D)),
$$
that is, $\phi_t$ is a Jensen-affine extension of $\phi$ to $\aff(D)$.
Due to the uniqueness of such extensions, we get that $\phi_t=\phi_{1/2}$
for all $t\in]0,1[$. Consequently, $\phi^*:=\phi_{1/2}$ satisfies
\Eq{20}{
  \phi^*(tx+(1-t)y)= t\phi^*(x)+(1-t)\phi^*(y) 
    \qquad (x,y\in\aff(D)).
}

Let $x_0$ be an arbitrary point of $D$. Then $L:=\aff(D-x_0)$
is a linear subspace of $X$. It follows form the results
concerning Jensen-affine functions that there exists an additive
function $B^*:L\to\R$ and a constant $b^*\in\R$ such that
\Eq{21}{
  \phi^*(x)=B^*(x-x_0)+b^* \qquad (x\in \aff(D)).
}
Putting this form of $\phi^*$ into \eq{20}, we get (with the
substitutions $u=x-x_0$ and $v=y-x_0$) that
$$
  B^*(tu+(1-t)v)=tB^*(u)+(1-t)B^*(v) 
   \qquad (u,v\in L, \, t\in[0,1]).
$$
Using the additivity of $B^*$ this reduces (with $w=u-v$) to
$$
  B^*(tw)=tB^*(w) \qquad (w\in L,\, t\in[0,1]).
$$
Thus $B^*$ is also homogeneous, and hence it is linear. Now,
using transfinite induction, we can construct a linear extension
$B:X\to\R$ of $B^*$. Then, it follows from \eq{21} that
$$
  \phi(x)=\phi^*(x)=B^*(x-x_0)+b^*=B(x-x_0)+b^*
         =B(x)+b^*-B(x_0)=B(x)+b
  \qquad (x\in D),
$$
i.e., $\phi$ is of the desired form \eq{19}.
\end{proof}

The proof of Theorem 2 is now completed, since, by Lemma 7,
we have that
$$
  f(x)=\frac{\phif(x)}{\phi(x)}=\frac{A(x)+a}{B(x)+b}
  \qquad (x\in D).
$$
\end{proof}

Taking $D=X$ in our main theorem, we get the following consequence.

\begin{Cor} Let $f:X\to Y$ be a strict inversely convexity preserving 
function such that $f(X)$ is non-collinear.  Then there exist a linear
operator $A:X\to Y$ and a vector $a\in Y$ such that
\Eq{22}{
   f(x)=A(x)+a \qquad (x\in X).
}
\end{Cor}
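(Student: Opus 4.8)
The plan is to invoke Theorem 2 directly and then exploit the fact that its positivity condition now holds on the \emph{entire} space $X$. Since $X$ is itself a nonempty convex subset of $X$ and $f$ satisfies all the hypotheses of Theorem 2 (with $D=X$), the theorem yields a linear operator $A_0:X\to Y$, a linear function $B:X\to\R$, a vector $a_0\in Y$, and a scalar $b\in\R$ such that the positivity condition \eq{6} holds for every $x\in X$, i.e.\ $B(x)+b>0$ for all $x\in X$, and $f$ admits the representation \eq{5}, namely
$$
  f(x)=\frac{A_0(x)+a_0}{B(x)+b} \qquad (x\in X).
$$

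The key step is to observe that a linear functional which, after adding a constant, remains positive on all of $X$ must vanish identically. First I would evaluate the positivity at $x=0$, which gives $b=B(0)+b>0$, so the constant $b$ is positive. Next I would argue by contradiction: assume $B\not\equiv 0$ and pick $x_0\in X$ with $B(x_0)\neq 0$. Then the function of one real variable $t\mapsto B(tx_0)+b=tB(x_0)+b$ is nonconstant and hence attains negative values for suitable $t\in\R$, contradicting $B(x)+b>0$ on $X$. Therefore $B\equiv 0$, and consequently $B(x)+b=b>0$ for every $x\in X$.

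It then remains only to rescale. Dividing through by the positive constant $b$ and setting $A:=\tfrac{1}{b}A_0$ (which is again linear) and $a:=\tfrac{1}{b}a_0\in Y$, I obtain
$$
  f(x)=\frac{A_0(x)+a_0}{b}=A(x)+a \qquad (x\in X),
$$
which is exactly the asserted affine representation \eq{22}. There is essentially no hard part here, since Theorem 2 already supplies the rational form; the only substantive point is the elementary observation that a globally defined linear functional cannot be bounded below by a positive constant unless it is identically zero. It is precisely this that collapses the denominator to a constant and upgrades the rational map to a genuine affine one.
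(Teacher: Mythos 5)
Your proof is correct and takes essentially the same route as the paper's: apply Theorem 2 with $D=X$, use the global positivity condition \eq{6} to conclude $B\equiv 0$ and $b>0$, and then absorb the constant denominator into the affine map. You merely spell out two details the paper leaves implicit, namely the one-variable argument that a nonzero linear functional $B$ would make $t\mapsto tB(x_0)+b$ take negative values, and the explicit rescaling $A:=A_0/b$, $a:=a_0/b$ in place of the paper's ``without loss of generality $b=1$''.
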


\begin{proof} By Theorem 2, $f$ is of the form \eq{5} and \eq{6} is
satisfied on $D=X$. Therefore, $B$ must be identically zero. Thus,
$b>0$. Without loss of generality, we can take $b=1$. Then \eq{5}
reduces to the desired representation \eq{22}.
\end{proof}


\end{document}